\newtheorem{theorem}{Theorem}[section]
\newtheorem{corollary}[theorem]{Corollary}
\newtheorem{proposition}[theorem]{Proposition}
\theoremstyle{definition}
\newtheorem{definition}[theorem]{Definition}
\theoremstyle{definition}
\newtheorem{remark}[theorem]{Remark}
\theoremstyle{definition}
\newtheorem{example}[theorem]{Example}
\newcommand{\C}{\mathscr{C}}
\newcommand{\M}{\mathscr{M}}
\newcommand{\Tr}{\mathrm{Tr}}
\newcommand{\Wh}{\mathrm{Wh^{Diff}}}
\newcommand{\tr}{\mathrm{tr}}
\DeclareMathOperator*{\hocolim}{hocolim}
\DeclareMathOperator*{\holim}{holim}
\DeclareMathOperator{\map}{map}
\DeclareMathOperator{\pr}{pr}
\DeclareMathOperator{\simp}{simp}
\newcommand{\ZZ}{\mathbb{Z}}
\newcommand{\Rfd}{\mathcal{R}_{hf}}
\newcommand{\E}{\mathbb{E}}
\begin{document}

\title{On transfer maps in the algebraic $K$-theory of spaces} %
\author[G. Raptis]{George Raptis}

\begin{abstract}
We show that the Waldhausen trace map $\Tr_X \colon A(X) \to QX_+$, which defines a natural splitting map from the 
algebraic $K$-theory of spaces 
to stable homotopy, is natural up to \emph{weak} homotopy with respect to transfer maps in algebraic $K$-theory and 
Becker-Gottlieb transfer maps respectively.
\end{abstract}

\address{\newline
G. Raptis \newline
Fakult\"at f\"ur Mathematik, Universit\"at Regensburg, 93040 Regensburg, Germany}
\email{georgios.raptis@ur.de}

\maketitle

\section{Introduction}

This paper addresses a question about the properties of transfer maps in the algebraic $K$-theory of spaces. In order to motivate and state this question, we first recall a few facts about the algebraic $K$-theory of spaces.

\medskip 

\noindent The algebraic $K$-theory of spaces, introduced by Waldhausen (see \cite{Wa}), is a functor from the category spaces which takes values in infinite loop spaces. The algebraic $K$-theory $A(X)$ of the space $X$, also known as the $A$-theory of $X$, encodes important invariants of $X$ that are of great interest in both homotopy theory and geometric topology. $A(X)$ 
is defined as the Waldhausen $K$-theory of a category of retractive spaces over $X$ with certain finiteness properties, or equivalently, when $X$ is based and path-connected, as the Waldhausen 
$K$-theory of a category of modules over the ring spectrum $\Sigma^{\infty} \Omega X_+$ that satisfy analogous finiteness assumptions. There is a natural homotopy fiber sequence:
\begin{equation} \label{basic-fib-seq}
\xymatrix{
QX_+ \ar[r]^{\eta_X} & A(X) \ar[r] & \Wh(X) \\
}                                                            
\end{equation}
where $QX_+ \colon = \Omega^{\infty} \Sigma^{\infty} X_+$ and $\eta_X$ denotes the unit transformation. If $X$ is a compact smooth manifold, the space $\Omega \Wh(X)$ is homotopy equivalent 
to the stable parametrized $h$-cobordism space of $X$ \cite{WJR}. The unit transformation admits a natural retraction up to homotopy, given by the Waldhausen trace map,
$$\Tr_X \colon A(X) \to QX_+,$$
and therefore we obtain from \eqref{basic-fib-seq} a natural homotopy equivalence of infinite loop spaces:
\begin{equation} \label{prod-dec}
A(X) \simeq QX_+ \times \Wh(X).
\end{equation}

\noindent In addition to the covariant functoriality, $A$-theory also admits transfer maps for fibrations $p \colon E \to B$ with homotopy finite fibers. In terms of the definitions indicated above, the $A$-theory transfer map 
$$p^* \colon A(B) \to A(E)$$
is given either by pulling back a retractive space over $B$ to a retractive space over $E$, or by restriction along the associated map of ring spectra $\Sigma^{\infty} \Omega E_+ \to \Sigma^{\infty} \Omega B_+$
when $p$ is a based map between based path-connected spaces. The point here is that either one of these two operations preserves the required finiteness properties when $p$ has homotopy finite fibers. Similarly to 
other types of transfer or `wrong-way' maps, these transfer maps enrich the structure of $A$-theory and their study is closely related to index theorems in $A$-theory (see, for example, \cite{Wi}). 

\medskip

The question that we address in this paper is concerned with understanding the $A$-theory transfer maps in terms of the product decomposition of 
$A$-theory in \eqref{prod-dec}. Here follows a 
brief summary of some known results in this direction. Let $p \colon E \to B$ denote a fibration with homotopy finite fibers. 
\begin{itemize}
 \item[(A)] The composition of infinite loop space maps
 $$QB_+ \xrightarrow{\eta_B} A(B) \xrightarrow{p^*} A(E) \xrightarrow{\Tr_E} QE_+$$ 
 is naturally homotopic to the Becker-Gottlieb transfer map $QB_+ \xrightarrow{\tr_p} QE_+$ associated to $p$ (Lind--Malkiewich \cite{LM}; see also \cite{DJ, D}).
 \item[(B)] The composition $QB_+ \xrightarrow{\eta_B} A(B) \xrightarrow{p^*} A(E)$ does not factor in general through the unit map $\eta_E \colon QE_+ \to A(E)$. Moreover, the composition 
 $$QB_+ \xrightarrow{\eta_B} A(B) \xrightarrow{p^*} A(E) \rightarrow \Wh(E)$$ 
 encodes higher Whitehead torsion invariants of $p$. According to the Dwyer-Weiss-Williams smooth index theorem \cite{DWW, Wi, RS2}, if $p$ is fiberwise equivalent to a fiber bundle of 
 compact smooth manifolds, then the square 
 $$
 \xymatrix{
 QB_+ \ar[r]^{\eta_B} \ar[d]_{\tr_p} & A(B) \ar[d]_{p^*} \\
 QE_+ \ar[r]^{\eta_E} & A(E) 
 }
 $$
commutes up to a canonical homotopy.  
\end{itemize}
Although not immediately related to $A$-theory transfer maps, the following result will also be relevant:
\begin{itemize}
 \item[(C)] The Becker-Gottlieb transfer maps are functorial in the stable homotopy category: given fibrations $p \colon E \to B$ and $q \colon V \to E$ with homotopy finite fibers, then the 
 composition $QB_+ \xrightarrow{\tr_p} QE_+ \xrightarrow{\tr_q} QV_+$ is homotopic to $QB_+ \xrightarrow{\tr_{p\circ q}} QV_+$ (Klein--Malkiewich \cite{KM}).
\end{itemize}
We note that the problem of identifying a canonical and natural homotopy for the functoriality of the Becker-Gottlieb transfer maps in (C) remains open. 

\medskip

\noindent In (B), we see that the Dwyer-Weiss-Williams theorem explains the naturality (or lack thereof) of the unit transformation with respect to the Becker-Gottlieb transfer maps and $A$-theory transfer maps, respectively. Similarly, we consider in this paper the analogous question for the Waldhausen trace map and ask about the commutativity of the following square:
\begin{equation} \label{main-question} \tag{$\ast$}
\xymatrix{\ar@{}[dr] | {?}
A(B) \ar[d]_{p^*} \ar[r]^{\Tr_B} & QB_+ \ar[d]^{\tr_p} \\
A(E) \ar[r]_{\Tr_E} & QE_+.
 }
\end{equation}
With the exception of a suggestive comment in \cite[p. 394]{Wa4}, it seems that this question has not been considered in the literature until recently in connection with the results stated in (A)--(C) above. Note that the commutativity of \eqref{main-question} implies that the composition 
$$\Wh(B) \stackrel{\subset}{\to} A(B) \xrightarrow{p^*} A(E) \xrightarrow{\Tr_E} QE_+$$
is trivial. We point out that the commutativity of (\ref{main-question}) also easily implies the statements in (A) and (C). In addition, any naturality properties for a choice of a 
homotopy in \eqref{main-question} will imply analogous naturality properties for the identifications in (A) and (C). 

\medskip

We now state our main results in connection with this question \eqref{main-question} and give an outline of the proofs. Our main result (Theorem \ref{main-thm-2}) is that the square \eqref{main-question} commutes up to weak homotopy. 
(We recall that two maps $f, g \colon X \to Y$ are \emph{weakly homotopic} if for any map $h \colon K \to X$ where $K$ is homotopy finite, the maps $fh$ and $gh$ are homotopic. In this case, we also say that $f$ and $g$ agree up to weak homotopy.) In particular, this implies that the square \eqref{main-question} commutes on homotopy groups. For the proof of Theorem \ref{main-thm-2}, we work with the definition of $A$-theory in terms of group completion and the plus construction (see \cite{Wa, Wa3}). Assuming that $B$ is based and 
path-connected, let $\M^{n,k}_B$ denote a model for the classifying space of fibrations of retractive spaces over $B$ which are of type $(B \vee \bigvee_1^k S^n, B)$. It is known that $A(B)$ is obtained from these spaces by passing to $k \to \infty$ (addition of summands) and $n \to \infty$ (suspension) and applying the plus construction. Using this description of $A$-theory, we apply
a result of Hausmann--Vogel about acyclic maps \cite{HV} which allows us to find for any map $h \colon K \to A(B)$, where $K$ is homotopy finite and path-connected, a homotopy commutative diagram as follows:
$$
\xymatrix{
K' \ar[d]_j \ar[r] & \M_B^{n,k} \ar[d] \\
K \ar[r]_h & A(B)
}
$$
where $j$ is an acyclic map. This `factorization of $h$ up to an acyclic map', which may also be of independent interest, effectively reduces the proof of Theorem \ref{main-thm-2} to showing that the square \eqref{main-question} commutes after precomposition with the canonical map $\M_B^{n,k} \to A(B)$. Assuming that $B$ is homotopy finite, this claim is then a special case of Theorem \ref{main-thm-1}. For the 
proof of Theorem \ref{main-thm-1}, we consider a relative version of the $A$-theory characteristic of Dwyer-Weiss-Williams and use the theorem of Lind--Malkiewich \cite{LM} (stated in (A)) in order to express the composition 
\begin{equation} \label{main-map} 
\M^{n,k}_B \to A(B) \xrightarrow{\Tr_B} QB_+ 
\end{equation}
in terms of Becker-Gottlieb transfer maps (Corollary \ref{identify-trace-map}). Finally, using this identification of \eqref{main-map}, the proof of Theorem \ref{main-thm-1} is reduced to the functoriality of the Becker-Gottlieb transfer maps shown by Klein--Malkiewich \cite{KM} (stated in (C)).

\medskip

\noindent \textbf{Organization of the paper.} In Section 2, we begin with the definition of $A$-theory in terms of spherical objects and group completion which will be especially useful for our purposes. Then we briefly recall the definition 
of bivariant $A$-theory and the coassembly map in order to introduce a relative version of the $A$-theory characteristic and explore some of its properties. In addition, we consider the classifying space $\M_X(V)$ -- a generalization of 
$\M_X^{n,k}$ -- and identify the canonical map $\M_X(V) \to A(X)$ in terms of the $A$-theory characteristic. In Section 3, we first recall the definition and some properties of the $A$-theory transfer 
maps and state the theorems of Lind--Malkiewich \cite{LM} and Klein--Malkiewich \cite{KM}. Combining these results, we obtain an identification of the map \eqref{main-map} in terms of Becker-Gottlieb transfer maps (Corollary \ref{identify-trace-map}) and then prove Theorem \ref{main-thm-1}. In Section 4, 
we first review some facts about acyclic maps and the plus construction and then conclude with the proof of the main theorem (Theorem \ref{main-thm-2}). We end in Section 5 with a couple of final remarks about the commutativity of \eqref{main-question}. Firstly, we discuss the limitations of our approach for deciding whether 
\eqref{main-question} commutes up to a canonical homotopy. Secondly, we recall the natural factorization of 
the Waldhausen trace map through topological Hochschild homology (THH) and comment on the analogous question to 
\eqref{main-question} for THH. 

\medskip

\noindent \textbf{Acknowledgements.} I would like to thank John Lind, Cary Malkiewich and Mona Merling for many interesting discussions about the commutativity of \eqref{main-question} and their interest in this work. 
I also thank John Rognes and Wolfgang Steimle for helpful discussions and for their comments on earlier drafts of 
this work. I gratefully acknowledge the support and the hospitality of the Isaac Newton Institute during the Research Programme ``\emph{Homotopy harnessing higher structures}'' and the support from the \textit{SFB 1085 -- Higher Invariants} (University of Regensburg).

\section{The $A$-theory characteristic}

\subsection{Recollections} \label{recollect-1} Let $X$ be a space and let $\Rfd(X)$ denote the Waldhausen category of homotopy finite retractive spaces over $X$ (see \cite[2.1]{Wa}). 
The Waldhausen $K$-theory of $\Rfd(X)$, 
$$A(X) \colon = K(\Rfd(X)),$$
is the \emph{algebraic K-theory of X} ($A$-theory). We recall that $A$-theory defines a covariant functor which preserves weak homotopy equivalences and 
takes values in infinite loop spaces. We will generally denote by $f_* \colon A(X) \to A(X')$ the map in $A$-theory induced by a map $f \colon X \to X'$.

Suppose that $X$ is based and path-connected. For $n \geq 0$, let $\C^n_X$ denote the full Waldhausen subcategory of $\Rfd(X)$ which consists of the objects which are weakly equivalent in $\Rfd(X)$ to a retractive object 
of the form $(X \vee \bigvee_{1}^k S^n, X)$ for some $k \geq 0$. 
Let $w \C_X^n$ denote the subcategory of weak equivalences in $\C_X^n$. The category $w\C_X^n$ is a symmetric monoidal category with respect to the coproduct pairing in $\C_X^n$. As a consequence, the geometric realization of this category $|w \C_X^n|$ carries the structure of an $\E_{\infty}$-space. 

The exact inclusion functor $\C_X^n \hookrightarrow \Rfd(X)$ induces a canonical map in Waldhausen $K$-theory
$K(\C_X^n) \rightarrow K(\Rfd(X)) = A(X)$. Note that the suspension functor on $\Rfd(X)$ restricts to exact functors $\Sigma \colon \C_X^n \to \C_X^{n+1}$, $n \geq 0$. After stabilizing with respect to the suspension functor, we obtain a canonical homotopy equivalence:
\begin{equation} \label{group-comp-A-theory}
\hocolim_{(\Sigma)} K(\C_X^n) \stackrel{\simeq}{\longrightarrow} \hocolim_{(\Sigma)} K(\Rfd(X)) \simeq A(X).
\end{equation}
This homotopy equivalence is essentially a consequence of Waldhausen's theorem on spherical objects \cite[1.7]{Wa}. Since the cofibrations in $\C_X^n$ split up to weak equivalence, it follows from \cite[1.8]{Wa} that the homotopy colimit of the Waldhausen $K$-theories $K(\C_X^n)$, $n \geq 0$, is homotopy equivalent to the homotopy colimit of the corresponding 
diagram of the group completions of $|w \C_X^n|$ for $n \geq 0$ (that is, the $K$-theory of the symmetric monoidal category $w \C_X^n$ for every $n \geq 0$). This identification of $A$-theory is a variation of a similar identification in terms of matrices which was shown in \cite[2.2]{Wa}. 

\subsection{The coassembly map} Let $p \colon E \to B$ be a fibration and let $\Rfd(p)$ denote the Waldhausen category of retractive spaces $(V, E)$ over $E$ which restrict fiberwise for each $b \in B$ to a homotopy finite retractive space $(V_b, E_b) \in \Rfd(E_b)$,
$$
\xymatrix{
V_b \ar[r] \ar@/_/[d]_{r_b} & V \ar@/_/[d]_r \ar@/^/[rd]^{q = p r} & \\
E_b \ar[r] \ar@/_/[u]_{i_b} & E \ar[r]_p \ar@/_/[u]_{i} & B 
}
$$
and $q \colon V \to B$ is a fibration. We recall that the assignment $p \mapsto \Rfd(p)$ is covariant in $E$ (as a space over $B$), contravariant in $B$, and it is homotopy invariant in both variances (see \cite[Section 3]{RS1} for more 
details). The Waldhausen $K$-theory of $\Rfd(p)$,
$$A(p) \colon = K(\Rfd(p)),$$ is the \emph{bivariant A-theory of p}. Following \cite[4.1]{RS1}, and assuming without essential loss of generality that $B = |B_{\bullet}|$ is the geometric realization of a simplicial set, there is a coassembly functor:
\begin{equation} \label{coassembly-functor} 
w \Rfd(p) \longrightarrow \holim_{\simp(B)} w \Rfd(E_{|?})
\end{equation}
and a coassembly map in $A$-theory:
\begin{equation} \label{coassembly-map}
\nabla_p \colon A(p) \longrightarrow \holim_{\simp(B)} A(E_{|?})
\end{equation}
where $\simp(B)$ is the simplex category of $B_{\bullet}$ and $E_{|\sigma}$ denotes the total space of the restriction of $p$ along the inclusion of the simplex $\sigma$. The coassembly map 
\eqref{coassembly-map} is obtained from \eqref{coassembly-functor} by passing to the $S_{\bullet}$-construction in \eqref{coassembly-functor} and then permuting a homotopy limit out of a geometric realization (`homotopy 
limit problem'). The coassembly map is a map of infinite loop spaces which is natural with respect to $p$. The target of the coassembly map can be naturally identified up to homotopy equivalence with the space of 
sections of a fibration $A_B(p) \colon A_B(E) \to B$ which is obtained from 
$p$ by applying the $A$-theory functor fiberwise. With this identification in mind, the coassembly map can be roughly expressed on the objects of $\Rfd(p)$ as follows:
\begin{equation} \label{heuristic} 
\nabla_p \colon \big((V, E) \in A(p)\big) \mapsto \big(b \in B \mapsto (V_b, E_b) \in A(E_b)\big).
\end{equation}
(We refer to \cite[Section 4]{RS1} and \cite{Wi} for more details.)
The inclusion maps $E_{|\sigma} \to E$, $\sigma \in \simp(B)$, induce a collection of compatible maps $A(E_{|?}) \to A(E)$, and therefore also a canonical map 
$$A_B(E) \simeq \hocolim_{\simp(B)} A(E_{|?}) \to A(E),$$
from which we obtain a `forgetful' map:
$$\holim_{\simp(B)} A(E_{|?}) \to \map(B, A(E)).$$ 

\begin{definition}
Let $p \colon E \to B = |B_{\bullet}|$ be a fibration and $(V, E) \in \Rfd(p)$. The \emph{(parametrized) $A$-theory characteristic} of $(V, E)$ is the map 
$$\chi_B(V,E) \colon = (\nabla_p(V, E) \colon B \longrightarrow A_B(E)).$$
We also denote the \emph{(unparametrized) A-theory characteristic} of $(V,E)$ by:
$$\chi(V, E) \colon B \xrightarrow{\chi_B(V,E)} A_B(E) \to A(E).$$
\end{definition}

\begin{remark}
Using the heuristic description of the coassembly map in \eqref{heuristic}, the parametrized $A$-theory characteristic of $(V,E)$ can be described as 
$$\chi_B(V,E) \colon (b \in B) \mapsto ((V_b, E_b) \in A(E_b)).$$ 
Similarly, the unparametrized $A$-theory characteristic of $(V,E)$ can be described as 
$$\chi(V,E) \colon (b \in B) \mapsto ((E \cup_{E_b} V_b, E) \in A(E)).$$
\end{remark}

\begin{example}
Let $p \colon E \to B = |B_{\bullet}|$ be a fibration with homotopy finite fibers. Then $(V = E \times S^0, E)$ is an object of $\Rfd(p)$. 
The parametrized $A$-theory characteristic of $(E \times S^0, E)$ agrees with the A-theory characteristic of $p$ as defined by Dwyer--Weiss--Williams \cite{DWW} (see also \cite[4.2]{RS1}, \cite{Wi}). This will be denoted here by 
$\chi_B(p) \colon B \to A_B(E)$ and can be described as $(b \in B) \mapsto ((E_b \times S^0, E_b) \in A(E_b))$. We will also denote the associated unparametrized $A$-theory characteristic by
$$\chi(p) \colon = \chi(E \times S^0, E).$$   
Note that the path component of $(X \times S^0, X) \in A(X)$, where $X$ is homotopy finite and path-connected, is identified with the classical Euler characteristic $\chi(X) \in \ZZ \cong \pi_0 A(X)$.
Thus, we may regard $\chi_B(p)$ as a parametrized $A$-theoretic refinement of the classical Euler characteristic. If $p \colon B \times F \to B$ is a trivial fibration where $F$ is homotopy finite, then
the section $\chi_B(p) \colon B \to A_B(B \times F) \simeq B \times A(F)$ is given up to canonical homotopy by the constant map $B \to A(F)$ at $(F \times S^0, F) \in A(F)$. 
\end{example}

\begin{proposition} \label{relative-characteristic}
Let $p \colon E \to B = |B_{\bullet}|$ be a fibration with homotopy finite fibers and let $(V, E)$ be an object of $\Rfd(p)$, depicted as follows:
$$
\xymatrix{
V \ar@/^/[rd]^{q = p r} \ar@/^/[d]^r & \\
E \ar[r]_p \ar@/^/[u]^i & B. 
}
$$
Then the composite map $r_* \chi(q) \colon B \xrightarrow{\chi(q)} A(V) \xrightarrow{r_*} A(E)$ is homotopic to the map 
$\chi(p) + \chi(V, E) \colon B \to A(E)$, naturally in $p$. 
\end{proposition}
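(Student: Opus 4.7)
The plan is to lift the asserted identity to the category $\Rfd(p)$ itself and then push it forward through the coassembly map.

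First, I would observe that in $\Rfd(p)$ there is a canonical isomorphism
$$r_*(V \times S^0, V) \;\cong\; (V, E) \vee (E \times S^0, E),$$
where the left-hand side denotes the pushforward of $(V \times S^0, V) \in \Rfd(q)$ along the map of fibrations $r \colon V \to E$ over $B$, and the right-hand side is the coproduct taken in $\Rfd(p)$. Unpacking the pushout that defines $r_*$, both sides have underlying total space $V \sqcup E$ with retraction $r$ on $V$ and $\mathrm{id}$ on $E$; they differ only in their choice of section, and the isomorphism swaps the section image $i(E) \subset V$ with the external copy of $E$. This identification is manifestly natural in $(V, E)$ and in $p$, and lands in $\Rfd(p)$ since $V_b$ and $E_b$ are homotopy finite for every $b \in B$.

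Next, I would invoke two formal properties of the coassembly map from \cite{RS1}. The first is naturality with respect to pushforward: the square
$$\xymatrix{A(q) \ar[r]^-{\nabla_q} \ar[d]_{r_*} & \holim_{\simp(B)} A(V_{|?}) \ar[d]^{r_*} \\ A(p) \ar[r]^-{\nabla_p} & \holim_{\simp(B)} A(E_{|?})}$$
commutes up to canonical homotopy, which follows from the evident compatibility of the coassembly functors \eqref{coassembly-functor} with fiberwise pushforward along $r$. The second is additivity: since $\nabla_p$ is a map of infinite loop spaces, it carries the coproduct pairing in $\Rfd(p)$ to loop addition in the target. Applying the first property to $(V \times S^0, V)$ and then the second to the identification of Step~1, one obtains
$$r_* \chi_B(q) \;=\; r_*\nabla_q(V \times S^0, V) \;\simeq\; \nabla_p\bigl(r_*(V \times S^0, V)\bigr) \;\simeq\; \chi_B(p) + \chi_B(V, E)$$
as sections of $A_B(E) \to B$. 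Postcomposing with the forgetful map $\holim_{\simp(B)} A(E_{|?}) \to \map(B, A(E))$ yields the required homotopy $r_* \chi(q) \simeq \chi(p) + \chi(V, E)$, and naturality in $p$ is inherited from that of each step.

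The main obstacle is the careful setup of the pushforward isomorphism in Step~1 and the naturality square for $\nabla$ in Step~2; both are essentially bookkeeping, but they require tracking sections, retractions and the simplexwise structure of $\Rfd(p)$ and $\Rfd(q)$ with care. Once these are pinned down, the additivity of $\nabla_p$ makes the rest of the argument formal.
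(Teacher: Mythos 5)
Your second step (naturality of the coassembly map with respect to the pushforward $r_*$) matches the paper and is fine, but the first step contains a genuine error. The claimed isomorphism
$$r_*(V \times S^0, V) \;\cong\; (V, E) \vee (E \times S^0, E)$$
in $\Rfd(p)$ is false in general. Both sides do have underlying space $V \sqcup E$ with retraction $r \sqcup \mathrm{id}$, but the section of the left-hand side is the inclusion of the external copy of $E$, while the section of the coproduct is $i \colon E \to V \hookrightarrow V \sqcup E$. An isomorphism in $\Rfd(p)$ must commute with the sections, so it would have to carry the external copy of $E$ homeomorphically onto $i(E) \subset V$ and $V$ onto the complement $(V \setminus i(E)) \sqcup E$ — which is impossible in general, already at the level of underlying spaces. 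The two objects are \emph{not} isomorphic; they are related only by a (typically non-split) cofiber sequence in $\Rfd(p)$:
$$(E \times S^0, E) \;\rightarrowtail\; r_*(V \times S^0, V) = (V \sqcup E, E) \;\twoheadrightarrow\; (V, E),$$
where the cofibration includes $E \sqcup E$ via $\mathrm{id} \sqcup i$. This is exactly what the paper uses. The desired identity of homotopy classes then follows not from the "additivity of $\nabla_p$ with respect to coproducts" as you argue, but from Waldhausen's additivity theorem: the $K$-theory space $A(p)$ carries a canonical homotopy $[(V \sqcup E, E)] \simeq [(E \times S^0, E)] + [(V, E)]$ induced by a cofiber sequence, and the coassembly map, being a map of $K$-theory spaces, transports this relation. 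So your conclusion is correct and your use of coassembly naturality is right, but the mechanism producing the sum decomposition is the additivity theorem applied to a cofiber sequence, not an isomorphism with a coproduct.
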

\begin{proof}
Note that $q$ also has homotopy finite fibers, so $\chi(q)$ is well-defined. By the naturality of the coassembly map \eqref{coassembly-map} with respect to fiberwise maps over $B$, it follows that the composite map 
$$r_* \chi(q) \colon B \xrightarrow{\chi(q)} A(V) \xrightarrow{r_*} A(E)$$ 
agrees with the $A$-theory characteristic of $r_*(V \times S^0, V) = (V \sqcup E, E) \in \Rfd(p)$. Then the required identification of homotopy classes follows from the canonical 
cofiber sequence in $\Rfd(p)$: 
$$(E \times S^0, E) \rightarrowtail (V \sqcup E, E) \twoheadrightarrow (V, E).$$ 
The naturality statement expresses the fact that this homotopic 
identification of $\chi(V,E)$ as $r_* \chi(q) - \chi(p)$ is natural in $(V,E)$ with respect to the covariant and contravariant functorialities of $p \mapsto A(p)$. 
\end{proof}

\subsection{The classifiyng spaces $\M_X(V)$} \label{classifying-1} Let $X$ be a space and let $(V, X)$ be an object in $\Rfd(X)$ .  Let $\C_X(V)$ denote the full subcategory of $\Rfd(X)$ whose objects are the retractive spaces which are weakly equivalent to $(V, X)$. The classifying space of the subcategory of weak equivalences $w \C_X(V)$,
$$\M_X(V) \colon = |w \C_X(V)|,$$
is a classifying space for fibrations of retractive spaces of type $(V,X)$. In other words, we have a homotopy equivalence:
$$\M_X(V) \simeq  \mathrm{B Aut^h_X}(V)$$
where $\mathrm{Aut^h_X}(-)$ denotes the derived mapping space of homotopy self-equivalences in $\Rfd(X)$ (see, for example, \cite[Proposition 2.3]{DK}). Note that $\M_X(V)$ is a 
connected component of $|w \Rfd(X)|$. There is universal fibration of retractive objects 
$$
\xymatrix{
\M_X(V) \times X  \ar@{>->}[r]_(.6)i \ar[rd]_{\pr} \ar@/^1pc/[rr]^{\mathrm{Id}} & \mathscr{E}_X(V) \ar[r]_(.4)r \ar[d]^{p_X^V} & \M_X(V) \times X \ar[dl]^{\pr} \\
& \M_X(V)
}
$$
whose fibers over $\M_X(V)$ are weakly equivalent to the retractive space $(V, X)$. Note that the diagram above defines an object in $\Rfd(\M_X(V) \times X \xrightarrow{\pr} \M_X(V))$. 
The inclusion functor $\C_X(V) \subset \Rfd(X)$ induces a canonical map 
$$j_X^V \colon \M_X(V) \to |w \Rfd(X)| \to A(X).$$

\begin{example}
Suppose that $X$ is based and path-connected and consider the retractive space $(V, X) = (X \vee \bigvee_1^k S^n, X) \in \Rfd(X)$ for some $n, k \geq 0$. In this case, we denote $\C_X(V)$ by $\C_X^{n,k}$ for simplicity. The classifying space of the subcategory of weak equivalences $w \C_X^{n,k}$, denoted 
$$\M_X^{n,k} \colon = |w \C_X^{n,k}|,$$
is a classifying space for fibrations of retractive spaces of type $(X \vee \bigvee_1^k S^n, X)$, and we have a homotopy equivalence $\M_X^{n,k} \simeq  \mathrm{B Aut^h_X}(X \vee \bigvee_1^k S^n).$
Note that $\C_X^{n,k}$ is a full subcategory of $\C_X^n$ and $\M_X^{n,k}$ is a connected component of $|w \C_X^n|$. 
Similarly, the inclusion functor $\C_X^{n,k} \subset \Rfd(X)$ induces a canonical map 
$$j_X^{n,k} \colon \M_X^{n,k}\to |w \Rfd(X)| \to A(X).$$
By considering these maps for all $k \geq 0$, passing to the group completions of $\M_X^n = \bigsqcup_{k \geq 0} \M_X^{n,k}$ for $n \geq 0$, and letting $n \to \infty$ 
(along the suspension functor), we obtain the homotopy equivalence that was discussed after \eqref{group-comp-A-theory}.  
\end{example}

Let $\pi \colon \M_X(V) \times X \to X$ denote the projection map.

\begin{proposition} \label{comparison-1}
The map $j_X^V \colon \M_X(V) \to A(X)$ is naturally homotopic to the composite map
$$\M_X(V) \xrightarrow{\chi(\mathscr{E}_X(V), \M_X(V) \times X)} A(\M_X(V) \times X) \xrightarrow{\pi_*} A(X).$$
\end{proposition}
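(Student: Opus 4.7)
The plan is to unwind the definition of the $A$-theory characteristic applied to the universal family $(\mathscr{E}_X(V), \M_X(V) \times X)$ over the trivial fibration $\pr \colon \M_X(V) \times X \to \M_X(V)$, and to observe that its composition with $\pi_*$ tautologically recovers $j_X^V$. Concretely, I would use the pointwise description of the characteristic recalled in Subsection~2.2: for $(W, E) \in \Rfd(p \colon E \to B)$, the composite $\chi(W, E) \colon B \xrightarrow{\chi_B(W, E)} A_B(E) \to A(E)$ sends $b \in B$ to the image of the fiber class $(W_b, E_b) \in A(E_b)$ under the map $A(E_b) \to A(E)$ induced by the inclusion $E_b \hookrightarrow E$. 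This reading follows from the factorization $A_B(E) \simeq \hocolim_{\simp(B)} A(E_{|?}) \to A(E)$.

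Specializing to $B = \M_X(V)$, $p = \pr$, and $(W, E) = (\mathscr{E}_X(V), \M_X(V) \times X)$, the value of $\chi(\mathscr{E}_X(V), \M_X(V) \times X)$ at $m \in \M_X(V)$ is the class of $(V_m, X)$ transported along $\{m\} \times X \hookrightarrow \M_X(V) \times X$. Since $\pi$ restricted to the slice $\{m\} \times X$ is the identity on $X$, post-composition with $\pi_*$ returns $(V_m, X) \in A(X)$. On the other hand, by the definition of $j_X^V$ as the map induced by $\C_X(V) \hookrightarrow \Rfd(X)$, we have $j_X^V(m) = (V_m, X) \in A(X)$, so the two maps agree on points of $\M_X(V)$.

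To upgrade this pointwise comparison to a homotopy of maps, I would work at the level of classifying spaces of Waldhausen subcategories. The coassembly functor
$$w \Rfd(\pr) \longrightarrow \holim_{\simp(\M_X(V))} w \Rfd\bigl((\M_X(V) \times X)_{|?}\bigr)$$
applied to the universal object, composed with the canonical map to $w \Rfd(\M_X(V) \times X)$ and then with $\pi_*$, factors on each simplex $\sigma \colon \Delta^n \to \M_X(V)$ through a functor which on objects coincides with the inclusion $\C_X(V) \hookrightarrow \Rfd(X)$. Assembling the resulting natural transformations across $\simp(\M_X(V))$ via the homotopy limit problem used in the definition of the coassembly map produces the required homotopy. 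The main obstacle I anticipate is making this simplex-by-simplex identification precise, which amounts to checking that on each $\sigma$ the restriction of $\mathscr{E}_X(V)$ is the tautological family assembled from the objects of $\C_X(V)$ labelling the vertices of $\sigma$; this is built into the realization of $\M_X(V) \simeq \mathrm{B Aut^h_X}(V)$ as a classifying space for fibrations of type $(V, X)$. Naturality in $(V, X)$ then follows from the naturality of the coassembly map recalled in Subsection~2.2.
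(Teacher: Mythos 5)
Your proposal follows essentially the same route as the paper: both treat the statement as the "intended" meaning of the characteristic made precise by unwinding the coassembly construction applied to the universal object $(\mathscr{E}_X(V), \M_X(V)\times X)$ over $\pr$. However, you explicitly flag "the main obstacle" — making the simplex-by-simplex identification precise — and then leave it unresolved, which is exactly where the one nontrivial technical input of the proof sits.

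Concretely, the coassembly construction does not directly produce a map out of $\M_X(V)$; it produces a map out of $|\simp(N_\bullet w\C_X(V))|$, the realization of the simplex category of the nerve of $w\C_X(V)$. Your argument needs a \emph{canonical} comparison of this space with $\M_X(V) = |w\C_X(V)|$ that is simultaneously compatible with the map into $|w\Rfd(X)|$ coming from the composite $\pi_* \circ \chi(\mathscr{E}_X(V), \M_X(V)\times X)$ on one side and with the inclusion $\M_X(V)\hookrightarrow |w\Rfd(X)|$ defining $j_X^V$ on the other. The paper supplies this via the last-vertex functor
$$\simp(N_{\bullet} w \C_X(V)) \to w \C_X(V), \qquad (\sigma\colon [m]\to w\C_X(V)) \mapsto \sigma(m),$$
whose realization is a natural homotopy equivalence along which the two maps into $|w\Rfd(X)|$ agree. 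Without naming a specific device of this kind, the phrase "assembling the resulting natural transformations across $\simp(\M_X(V))$ via the homotopy limit problem" does not yet constitute a proof; it restates the goal rather than achieving it. Adding the last-vertex comparison (or an equivalent rectification of the straightening) would close the gap, and the rest of your outline — the pointwise heuristic, the factorization $A_B(E)\simeq\hocolim A(E_{|?})\to A(E)$, and naturality in $(V,X)$ from naturality of coassembly — is correct and matches the paper.
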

\begin{proof}
This is immediate from what may be regarded as the \emph{intended} meaning of the $A$-theory characteristic, but the detailed identification is not direct by definition, because the \emph{actual} definition of the $A$-theory characteristic of a fibration $p$ makes use of the simplex category of the base in order to `straighten' the fibration $p$ and produce a model for its classifying map. In the case of $p_X^V$, however, such a model for the classifying map 
is simply the inclusion map $\M_X(V) \to |w \Rfd(X)|$, and the total space $\mathscr{E}_X(V)$ is the homotopy colimit 
of the forgetful functor from $w \C_X(V)$ to the category of spaces, given on objects by $(V', X) \mapsto V'$. The composite map displayed 
in the proposition is obtained from the following map: 
\begin{equation} 
\label{identification-of-inclusion-map} |\simp(N_\bullet w \C_X(V))| \to |w \Rfd(\M_X(V) \times X)| \xrightarrow{\pi_*} |w \Rfd(X)|, 
\end{equation}
where the first map arises from the image of the coassembly functor at the object $(\mathscr{E}_X(V), \M_X(V) \times X)$. Then the inclusion map $\M_X(V) \to |w \Rfd(X)|$ is identified canonically up to homotopy with the composition \eqref{identification-of-inclusion-map}, using the last-vertex functor
$$\simp(N_{\bullet} w \C_X(V)) \to w \C_X(V), \ (\sigma \colon [m] \to w \C_X(V)) \mapsto (\sigma(m) \in w\C_X(V)).$$ 
\end{proof}

\section{$A$-theory transfer maps and the Waldhausen trace}

\subsection{Recollections} Let $p \colon E \to B$ be a fibration with homotopy finite fibers. The transfer map in $A$-theory, 
\begin{equation} \label{transfer-map-A-theory} 
p^* \colon A(B) \to A(E),
\end{equation} is induced by the exact functor that is defined by pulling back retractive spaces
$$
 p^* \colon \Rfd(B) \to \Rfd(E), \ (V, B) \mapsto (V \times_B E, E).
$$
Note that the functor $p^*$ is well-defined, in particular, it preserves the finiteness condition, and that it is clearly functorial with respect to compositions of fibrations. 
The transfer map \eqref{transfer-map-A-theory} is also identified with the map 
$$A(B) \cong A(B) \wedge S^0 \xrightarrow{\mathrm{Id} \wedge (E \times S^0, E)_+} A(B) \wedge A(p) \to A(E)$$
where the last map is the product pairing in bivariant $A$-theory \cite[Appendix A]{RS1}. The $A$-theory transfer map is also natural with respect to pullbacks in the following 
sense: given a pullback square 
$$
\xymatrix{
E' \ar[d]_{p'} \ar[r]^{g'} & E \ar[d]^p \\
B' \ar[r]_g & B
}
$$ 
where $p$ is a fibration with homotopy finite fibers, then the square 
$$
\xymatrix{
A(E') \ar[r]^{g'_*} & A(E) \\
A(B') \ar[u]_{p'^*} \ar[r]_{g_*} & A(B)\ar[u]_{p^*}
}
$$ 
commutes (up to a canonical homotopy). 

\smallskip

\noindent There is a natural map of infinite loop spaces, called the \emph{unit map},
$$Q(X_+) \colon = \Omega^{\infty} \Sigma^{\infty} X_+ \xrightarrow{\eta_X} A(X),$$ 
whose cofiber is denoted by $\Wh(X)$. The unit map admits several equivalent descriptions: 
\begin{itemize} 
\item[(a)] It is given by the composition of maps of spectra
$$\mathbb{S} \wedge X_+ \xrightarrow{\eta \wedge \mathrm{Id}_+} \mathbf{A}(*) \wedge X_+ \xrightarrow{\alpha} \mathbf{A}(X)$$
where $\eta \colon \mathbb{S} \to \mathbf{A}(*)$ is the unit of the ring spectrum $\mathbf{A}(*)$, defined by $[S^0] \in A(*)$, and $\alpha$ is the assembly map for $A$-theory in the sense of \cite{WW}.
\item[(b)] It can be modeled by the inclusion of the full subcategory of $\Rfd(X)$ given by the retractive spaces which are (homotopically) discrete relative to $X$ (see, for example, \cite{RS1}).
\item[(c)] It can be defined using the models for stable homotopy and $A$-theory from Waldhausen's manifold approach \cite{Wa2}.
\item[(d)] It can be identified with a natural map from the cobordism category of manifolds with boundary to $A$-theory \cite{RS1, RS2, RS3}. 
\end{itemize} 
The relationship between the unit map and the $A$-theory characteristic is explained in the following proposition. 

\begin{proposition} \label{chi-eta-transfer}
Let $p \colon E \to B = |B_{\bullet}|$ be a fibration with homotopy finite fibers. Then the composite map 
$$B \subset QB_+ \xrightarrow{\eta_B} A(B) \xrightarrow{p^*} A(E)$$
is naturally homotopic to $\chi(p)$. 
\end{proposition}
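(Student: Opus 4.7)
The strategy is to realize the composite $B \subset QB_+ \xrightarrow{\eta_B} A(B) \xrightarrow{p^*} A(E)$ as the unparametrized $A$-theory characteristic of $(E \times S^0, E) \in \Rfd(p)$, which is by definition $\chi(p)$. Throughout I would work with description (b) of the unit map: $\eta_B$ is modeled by the inclusion of the full Waldhausen subcategory of $\Rfd(B)$ consisting of retractive spaces that are discrete relative to $B$. Under this model, the restriction $B \subset QB_+ \to A(B)$ sends a point $b$ to the class of the discrete retractive space $(B \sqcup \{b\}, B) \in \Rfd(B)$ whose non-basepoint summand maps constantly to $b$, and composing further with $p^*$ sends $b$ pointwise to the class $(E \sqcup E_b, E) \in A(E)$.

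The first step is to identify the composite $B \subset QB_+ \xrightarrow{\eta_B} A(B)$ with the unparametrized characteristic $\chi(\mathrm{id}_B)(B \times S^0, B)$ associated to the identity fibration $\mathrm{id}_B \colon B \to B$ and the trivial retractive space $(B \times S^0, B) \in \Rfd(\mathrm{id}_B)$. Indeed, the parametrized characteristic $\chi_B(\mathrm{id}_B)(B \times S^0, B)$ is the section $b \mapsto [S^0] \in A(\{b\})$, and the forgetful map $A_B(\mathrm{id}_B) \to A(B)$ pushes this forward along $\{b\} \hookrightarrow B$ to $[(B \sqcup \{b\}, B)] \in A(B)$, agreeing pointwise with $\eta_B$ restricted to $B$. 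Upgrading this pointwise agreement to a natural homotopy of maps of spaces requires a careful comparison of the simplicial model used for $\eta_B$ in description (b) with the coassembly functor associated to $\mathrm{id}_B$; this is the main technical part of the argument.

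The second step is the naturality of the coassembly map with respect to pullback along $p$. The map $p \colon E \to B$ is itself a morphism of fibrations over $B$ from $p$ to $\mathrm{id}_B$, and the induced pullback functor $p^* \colon \Rfd(\mathrm{id}_B) \to \Rfd(p)$ sends $(B \times S^0, B)$ to $(E \times S^0, E)$. By the naturality of the coassembly map \eqref{coassembly-map}, as developed in \cite[Section 4]{RS1}, the composite $B \xrightarrow{\chi(\mathrm{id}_B)(B \times S^0, B)} A(B) \xrightarrow{p^*} A(E)$ is canonically homotopic to $B \xrightarrow{\chi(p)(E \times S^0, E)} A(E) = \chi(p)$. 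Combining with Step 1 produces the desired natural homotopy, and the construction is manifestly natural in $p$.

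The main obstacle is Step 1: while the pointwise agreement is immediate from unwinding both definitions, constructing a natural homotopy of maps of spaces requires manipulating the explicit simplicial constructions underlying $\eta_B$ (via model (b)) and the coassembly applied to the identity fibration, and will likely rely on intermediate results from \cite{RS1} to make the comparison of simplicial data rigorous.
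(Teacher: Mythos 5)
Your proposal follows essentially the same route as the paper: the paper's proof likewise identifies $B \subset QB_+ \xrightarrow{\eta_B} A(B)$ with an explicit simplicial construction (via the model of \cite[3.4]{RS1}, the functor $\sigma \mapsto (B \sqcup \Delta^n, B)$, which is exactly what unwinding $\chi(\mathrm{id}_B)(B \times S^0, B)$ produces), and then observes that composing with $p^*$ yields, simplex-for-simplex, the image of $(E \times S^0, E)$ under the coassembly for $p$, i.e.\ $\chi(p)$. One small imprecision in your Step~2: the functor $p^* \colon \Rfd(\mathrm{id}_B) \to \Rfd(p)$, $(V,B) \mapsto (V \times_B E, E)$, is \emph{not} one of the two standard functorialities of bivariant $A$-theory (covariance in $E$ over $B$, contravariance in the base) for which naturality of the coassembly map is established in \cite[Section 4]{RS1}; in fact the map $p \colon E \to B$, viewed as a morphism of fibrations over $B$, induces a functor in the \emph{opposite} direction $\Rfd(p) \to \Rfd(\mathrm{id}_B)$. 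So the compatibility of coassembly with this transfer functor must be verified directly — but that verification is precisely the ``follows easily from the definitions'' computation the paper leaves to the reader, and it does go through as you describe.
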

\begin{proof}
Using the model for the unit map constructed in \cite[3.4]{RS1}, the composite map $B \subset QB_+ \xrightarrow{\eta_B} A(B)$ is identified with the map induced by the functor:
$$\simp(B) \to w \Rfd(B), \ (\sigma \colon \Delta^n_{\bullet} \to B_{\bullet}) \mapsto (B \hookrightarrow B \sqcup \Delta^n \xrightarrow{\mathrm{Id} \sqcup |\sigma|} B),$$
followed by the canonical map $|w \Rfd(B)| \to A(B)$. (More precisely, the comparison with \cite[3.4]{RS1} also uses a canonical identification $|\simp(B)| \simeq |\mathrm{sing}_{\bullet}B|$ where $\mathrm{sing}_{\bullet}$ denotes the singular set functor.) 
Then the identification with $\chi(p)$ follows easily from the definitions. 
\end{proof} 

\begin{remark}
A more general version of Proposition \ref{chi-eta-transfer} is as follows: suppose that $(V, B) \in \Rfd(B \xrightarrow{=} B)$ and let $\chi(V, B) \colon B \to A(B)$ be the $A$-theory characteristic of $(V, B)$. 
Then the composition $B \xrightarrow{\chi(V, B)} A(B) \xrightarrow{p^*} A(E)$ agrees with the $A$-theory characteristic $\chi(V', E) \colon B \to A(E)$ of the pullback $(V', E) \in \Rfd(p)$ of $(V, B)$ along $p \colon E \to B$. Proposition \ref{chi-eta-transfer} is essentially the special case for $(V, B) = (B \times S^0, B)$.  
\end{remark}

The unit map $\eta_X$ admits a natural splitting map (as infinite loop spaces) given by the \emph{Waldhausen trace map} \cite{Wa3}: 
$$\Tr_X \colon A(X) \to Q(X_+).$$
As a consequence, the transfer map \eqref{transfer-map-A-theory}, which is also an infinite loop map, may be described in terms of its restrictions to the different summands of the product decomposition of infinite loop spaces: 
$$A(X) \simeq QX_+ \times \Wh(X).$$
The following theorem of \cite{LM} identifies the restriction of the $A$-theory transfer map on the $Q$-summands in terms of the Becker-Gottlieb transfer map. Special cases of this result were also shown in \cite{DJ, D} 
using different methods. We refer to \cite{BG, DWW, KM} for the definition and properties of the Becker-Gottlieb transfer maps.

\begin{theorem} [Lind--Malkiewich \cite{LM}] \label{Lind-Malkiewich}
Let $p \colon E \to B$ be a fibration with homotopy finite fibers. Then the composite map 
$$QB_+ \xrightarrow{\eta_B} A(B) \xrightarrow{p^*} A(E) \xrightarrow{\Tr_E} QE_+$$
is naturally homotopic to the Becker-Gottlieb transfer map $\tr_p$ of $p$.
\end{theorem}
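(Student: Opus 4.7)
The plan is to exploit the infinite-loop-space structure to reduce the statement to a comparison of maps $B \to QE_+$. Both sides of the asserted homotopy are maps of infinite loop spaces out of $QB_+ = \Omega^{\infty} \Sigma^{\infty} B_+$: the composite on the left because $\eta_B$, $p^*$ and $\Tr_E$ are each infinite loop maps, and $\tr_p$ by construction. By the universal property of $\Omega^{\infty} \Sigma^{\infty}$, a homotopy class of infinite loop maps out of $QB_+$ is determined by its restriction to the adjoint $B_+ \hookrightarrow QB_+$. Hence it suffices to produce a natural homotopy between the two composites after restriction to $B \subset QB_+$.

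By Proposition \ref{chi-eta-transfer}, the restriction of $p^* \circ \eta_B$ to $B$ is naturally homotopic to the unparametrized $A$-theory characteristic $\chi(p) \colon B \to A(E)$. The left-hand side of the theorem, restricted to $B$, is therefore $\Tr_E \circ \chi(p)$, and the task reduces to producing a natural homotopy
$$\Tr_E \circ \chi(p) \;\simeq\; \tr_p\,|_B \colon B \longrightarrow QE_+.$$

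To produce this homotopy I would pass to a parametrized refinement: $\chi(p)$ factors through the parametrized $A$-theory characteristic $\chi_B(p) \colon B \to A_B(E)$, and the Waldhausen trace admits a compatible fiberwise version over $B$. Fiberwise, $\chi_B(p)$ selects the class $[E_b \times S^0] \in A(E_b)$, and the Waldhausen trace of this class is identified, via the Dennis-trace / Reidemeister-trace interpretation of $\Tr$, with the stable Euler characteristic of $E_b$ in $Q(E_{b+})$. Assembling these fiberwise evaluations over $B$ reproduces the Becker-Gottlieb transfer $\tr_p$ by its very definition via fiberwise Spanier-Whitehead duality.

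The main obstacle lies in making the parametrized comparison in the previous paragraph rigorous within a single model where the Waldhausen trace, the bivariant $A$-theory $A(p)$, and fiberwise duality can all be manipulated simultaneously. One effective framework is a THH-based model of the trace, in which the evaluation of $\Tr_{E_b}$ on $[E_b \times S^0]$ is manifestly the Reidemeister trace; combined with an identification of fiberwise Reidemeister traces with Becker-Gottlieb transfers, this yields the required natural homotopy. An alternative, more intrinsically $A$-theoretic strategy is to route the argument through the product pairing $A(B) \wedge A(p) \to A(E)$ mentioned in the recollections and apply the trace to the canonical class $(E \times S^0, E) \in A(p)$, using a universal property of $\Tr$ with respect to the bivariant pairing.
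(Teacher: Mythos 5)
The paper does not actually prove this statement: it is a recalled result, and the paper's ``proof'' consists of the single citation to Lind--Malkiewich \cite[Corollary 1.3]{LM}. Your proposal instead attempts to sketch an argument, so the relevant question is whether the sketch is complete. Your reduction step is sound and is, in effect, how the paper uses the theorem later: since $\eta_B$, $p^*$, $\Tr_E$, and $\tr_p$ are all infinite loop maps, a homotopy class of infinite loop maps out of $QB_+$ is determined by its restriction along $B \hookrightarrow QB_+$, and by Proposition \ref{chi-eta-transfer} the left-hand composite restricts to $\Tr_E \circ \chi(p)$. The task thus becomes exhibiting a natural homotopy $\Tr_E \circ \chi(p) \simeq \tr_p|_B$.

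The gap is that the remainder of the proposal restates this identification rather than proving it. The assertions that the Waldhausen trace admits a fiberwise version over $B$ compatible with the coassembled characteristic $\chi_B(p)$, that the trace of $[E_b \times S^0] \in A(E_b)$ equals the stable Euler characteristic in $Q(E_b)_+$ via a Reidemeister-trace interpretation, and that assembling these pointwise values over $B$ ``reproduces $\tr_p$ by its very definition'' are each substantial claims. Taken together they are precisely the content of the Lind--Malkiewich theorem; the technical effort of \cite{LM} is devoted to constructing a parametrized model (via free loop spaces and THH) in which these identifications can be made coherently and naturally, and none of them is a formal consequence of definitions. The closing suggestion to invoke a ``universal property of $\Tr$ with respect to the bivariant pairing'' likewise presupposes a compatibility between the trace and the bivariant product that is nowhere established in the paper and would itself require a nontrivial argument. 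As written, the proposal reduces the theorem to a statement of comparable difficulty and leaves the decisive step undischarged; there is also a secondary unaddressed point, namely that passing the desired \emph{natural} homotopy on $B$ back through the infinite-loop adjunction to a natural homotopy on $QB_+$ requires the adjunction data to be chosen naturally in $p$.
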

\begin{proof}
See \cite[Corollary 1.3]{LM}. We note that this is shown more generally for fibrations $p$ for which each fiber 
is a retract up to homotopy of a finite CW complex. 
\end{proof}

\begin{remark}
We emphasize that the composite $QB_+ \xrightarrow{\eta_B} A(B) \xrightarrow{p^*} A(E)$ does \emph{not} factor through the unit map $\eta_E \colon QE_+ \to A(E)$ in general. According to the Dwyer-Weiss-Williams smooth index theorem, this holds in the case where $p$ is fiberwise equivalent to a fiber bundle of compact smooth manifolds \cite{DWW, Wi, RS2}. In the general case, the \emph{obstruction} to such a factorization, that is, the map 
$$QB_+ \xrightarrow{\eta_B} A(B) \xrightarrow{p^*} A(E) \to \Wh(E),$$
encodes higher Whitehead torsion invariants of $p$.
\end{remark}

As a consequence of the results so far, we obtain an identification of the restriction of the Waldhausen trace map $\Tr_X$ along $j_X^V \colon \M_X(V) \to A(X)$ 
in terms of Becker-Gottlieb transfer maps. We recall from Subsection \ref{classifying-1} that for each $(V, X) \in \Rfd(X)$, there is a retractive object in 
$\Rfd(\M_X(V) \times X \xrightarrow{\pr} \M_X(V))$:
$$
\xymatrix{
\M_X(V) \times X  \ar@{>->}[r]_(.6)i \ar[rd]_{\pr} \ar@/^1pc/[rr]^{\mathrm{Id}} & \mathscr{E}_X(V) \ar[r]_(.4)r \ar[d]^{p_X^V} & \M_X(V) \times X \ar[dl]^{\pr} \\
& \M_X(V)
}
$$
which classifies fibrations with fibers weakly equivalent to $(V, X)$. 

\begin{corollary} \label{identify-trace-map}
Let $X$ be a path-connected and homotopy finite space and let $(V, X) \in \Rfd(X)$. Then the map 
$$\M_X(V) \xrightarrow{j_X^V} A(X) \xrightarrow{\Tr_X} QX_+$$ 
is homotopic, naturally in $X$, to the map 
$$\big(\M_X(V) \subset Q(\M_X(V)_+) \xrightarrow{\tr_{p_X^V}} Q(\mathscr{E}_X(V)_+) \xrightarrow{Q(\pi \circ r)_+} QX_+\big) - \mathrm{const}_{\chi(X)}$$
where $\mathrm{const}_{\chi(X)}$ denotes the constant map at $\chi(X)$, that is, the point in $QX_+$ that corresponds to the image of $(X \times S^0, X) \in A(X)$ under $\Tr_X \colon A(X) \to QX_+$.
\end{corollary}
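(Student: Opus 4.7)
The plan is to decompose $j_X^V$ via the $A$-theory characteristic and then invoke Theorem~\ref{Lind-Malkiewich} to convert the resulting $A$-theory ingredients into Becker--Gottlieb transfers. First, by Proposition~\ref{comparison-1}, $j_X^V$ is naturally homotopic to $\pi_* \circ \chi(\mathscr{E}_X(V), \M_X(V) \times X)$. Viewing $(\mathscr{E}_X(V), \M_X(V) \times X)$ as an object of $\Rfd(\pr)$ over the projection $\pr \colon \M_X(V) \times X \to \M_X(V)$, and observing that $p_X^V = \pr \circ r$, Proposition~\ref{relative-characteristic} yields the decomposition
\[
\chi(\mathscr{E}_X(V), \M_X(V) \times X) \;\sim\; r_*\,\chi(p_X^V) \;-\; \chi(\pr)
\]
as maps $\M_X(V) \to A(\M_X(V) \times X)$. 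Applying $\pi_*$ and $\Tr_X$, and using that $\Tr$ is a natural transformation of infinite loop spaces, $\Tr_X \circ j_X^V$ is exhibited as the difference of $Q(\pi r)_+ \circ \Tr_{\mathscr{E}_X(V)} \circ \chi(p_X^V)$ and $Q(\pi)_+ \circ \Tr_{\M_X(V)\times X} \circ \chi(\pr)$.

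For the first summand, Proposition~\ref{chi-eta-transfer} rewrites $\chi(p_X^V)$ as the restriction of $(p_X^V)^* \circ \eta_{\M_X(V)}$ to $\M_X(V) \subset Q(\M_X(V)_+)$, and Theorem~\ref{Lind-Malkiewich} applied to the fibration $p_X^V$ identifies $\Tr_{\mathscr{E}_X(V)} \circ (p_X^V)^* \circ \eta_{\M_X(V)}$ with the Becker--Gottlieb transfer $\tr_{p_X^V}$. Post-composing with $Q(\pi r)_+$ recovers the first displayed map in the formula.

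For the second summand, I would again apply Proposition~\ref{chi-eta-transfer}, now to $\pr$, and then exploit the naturality of the $A$-theory transfer with respect to the pullback square relating $\pr$ to the fibration $c\colon X \to \ast$: this shows $\pi_* \circ \pr^* \circ \eta_{\M_X(V)} \sim c^* \circ \eta_{\ast} \circ Q(c')_+$, where $c'\colon \M_X(V) \to \ast$. On $\M_X(V) \subset Q(\M_X(V)_+)$ this collapses to the constant map at $c^*([S^0]) = [(X \times S^0, X)] \in A(X)$, and applying $\Tr_X$ then yields the constant map at $\Tr_X([(X \times S^0, X)]) = \mathrm{const}_{\chi(X)}$ by the very definition given in the statement. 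Subtracting the two summands produces the claimed identification, and naturality in $X$ is inherited from the naturality in $X$ of each of the ingredients (Propositions~\ref{comparison-1}, \ref{relative-characteristic}, \ref{chi-eta-transfer} and Theorem~\ref{Lind-Malkiewich}). The only mildly delicate point is keeping track of the additive structure of the infinite loop space $QX_+$ through the various natural identifications, but this is pure bookkeeping and requires no additional input.
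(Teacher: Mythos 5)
Your proposal is correct and takes essentially the same route as the paper: the paper's proof is a one-line invocation of Propositions~\ref{comparison-1}, \ref{relative-characteristic}, \ref{chi-eta-transfer}, Theorem~\ref{Lind-Malkiewich}, and the naturality of $\Tr$, which is precisely the chain of identifications you spell out. Your handling of the constant term (via naturality of the $A$-theory transfer along the pullback square over $X \to \ast$ rather than a second appeal to Lind--Malkiewich) is a harmless variant that the paper's own parenthetical remark explicitly allows.
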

\begin{proof}
This is a direct application of, in order, the identifications in Proposition \ref{comparison-1}, Proposition \ref{relative-characteristic}, Proposition \ref{chi-eta-transfer}, and Theorem \ref{Lind-Malkiewich}, together with the naturality of $\Tr$. 

\noindent (By Theorem \ref{Lind-Malkiewich} or otherwise, $\chi(X)$ can be also identified with the Becker-Gottlieb transfer map of $X \to *$. The justification for the notation is that the class of $\chi(X)$ in 
$\pi_0 QX_+ \cong \mathbb{Z}$ corresponds to the classical Euler characteristic of $X$ -- this is where 
we use the assumption that $X$ is path-connected.)
\end{proof}

\subsection{Comparison of transfer maps on $\M_X(V)$} Let $p \colon E \to B$ be a fibration with homotopy finite fibers. In this subsection, we will show that the $A$-theory transfer map and the Becker-Gottlieb transfer map 
of $p \colon E \to B$ are compatible along the Waldhausen trace map when we restrict along $j_B^V \colon \M_B(V) \to A(B)$ for any $(V, B) \in \Rfd(B)$. 

The proof will make essential use of the following result.

\begin{theorem} [Klein--Malkiewich \cite{KM}] \label{BG-functoriality}
Let $q\colon V \to E$ and $p \colon E \to B$ be fibrations with homotopy finite fibers. Then $\tr_{p\circ q}\colon QB_+ \to QV_+$ is homotopic to the composition $QB_+ \xrightarrow{\tr_p} QE_+  \xrightarrow{\tr_p} QV_+$ (as maps of infinite loop spaces).  
\end{theorem}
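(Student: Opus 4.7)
The plan is to reduce the functoriality of the Becker--Gottlieb transfer to a compatibility of Pontryagin--Thom collapses under compatible fiberwise embeddings. First I would replace each of $p$ and $q$, up to fiberwise equivalence, by a smooth fiber bundle of compact manifolds (possibly with boundary); this is possible because the fibers are homotopy finite, and the Becker--Gottlieb transfer is invariant under such a replacement. With this reduction in hand, $\tr_p$ is realized by choosing a fiberwise smooth embedding $E \hookrightarrow B \times \RR^{N_1}$ with normal bundle $\nu_p \to E$, performing a fiberwise Pontryagin--Thom collapse $\Sigma^{N_1} B_+ \to \mathrm{Th}(\nu_p)$, and composing with the stable trivialization coming from $\nu_p \oplus \tau_{E/B} \cong \RR^{N_1}$ (where $\tau_{E/B}$ is the vertical tangent bundle) to produce a map $\Sigma^{N_1} B_+ \to \Sigma^{N_1} E_+$; an analogous choice produces $\tr_q$.

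Next I would choose a fiberwise smooth embedding $V \hookrightarrow E \times \RR^{N_2}$ over $E$ with normal bundle $\nu_q \to V$, and form the composite fiberwise embedding $V \hookrightarrow B \times \RR^{N_1 + N_2}$ over $B$. The key geometric observation is that the normal bundle of this composite splits canonically as $\nu_q \oplus q^* \nu_p$, and fiberwise tubular neighborhoods can be chosen so that the tube for $V \subset B \times \RR^{N_1 + N_2}$ is the total space of a bundle over the tube for $E \subset B \times \RR^{N_1}$, with fibers the tubes for the corresponding copies of $V \subset E \times \RR^{N_2}$. Consequently the Pontryagin--Thom collapse for the composite factors, up to canonical homotopy, as the $\RR^{N_2}$-stabilization of the collapse for $p$ followed by the fiberwise-over-$E$ Thom collapse coming from $q$. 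Combined with the canonical compatibility of stable trivializations
$$\nu_q \oplus q^* \nu_p \oplus \tau_{V/B} \cong (\nu_q \oplus \tau_{V/E}) \oplus q^*(\nu_p \oplus \tau_{E/B}),$$
this produces the desired homotopy $\tr_{p \circ q} \simeq \tr_q \circ \tr_p$ after passing to $QV_+$.

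The main obstacle will be making the geometric factorization canonical up to coherent homotopy, so that the choices of smoothings, embeddings, tubular neighborhoods, and trivializations do not affect the final comparison in stable homotopy. In particular one must verify that the base-change naturality of the embedded picture matches the standard definition of $\tr_p$ for a general fibration with homotopy finite fibers (not merely for a smooth fiber bundle). A more conceptual route would be to identify $\tr_p$ with the categorical trace of the identity on the dualizable parametrized spectrum $\Sigma^{\infty}_B E_+$ in the bicategory of parametrized spectra with base change; then functoriality of $\tr$ under composition of fibrations would follow formally from the functoriality of trace under composition of dualizable $1$-cells. However, this shifts the difficulty to identifying the categorical trace with the classical Becker--Gottlieb transfer in enough generality, which is itself a delicate comparison that would have to be carried out carefully.
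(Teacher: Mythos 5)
The paper does not prove this statement. Theorem~\ref{BG-functoriality} is simply cited from Klein--Malkiewich (their Theorem~A in \cite{KM}), with a remark that their result is actually more general (it covers fibrations whose fibers are retracts up to homotopy of finite CW complexes, i.e.\ finitely dominated fibers). So there is no ``paper's own proof'' to compare against; what you have written is a proposal for proving the Klein--Malkiewich theorem itself.

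Your proposal contains a gap that cannot be repaired: the very first reduction is false. You claim that a fibration $p\colon E \to B$ with homotopy finite fibers can be replaced, up to fiberwise equivalence, by a smooth fiber bundle of compact manifolds. This is not true in general. Having homotopy finite fibers means each fiber $F$ is homotopy equivalent to a finite complex (equivalently, and for a point one can thicken $F$ to a compact manifold); but the fibration is classified by a map $B \to BG(F)$ where $G(F)$ is the topological monoid of self-homotopy-equivalences of $F$, whereas a smooth compact fiber bundle is classified by a map to $B\mathrm{Diff}(M)$. Lifting along $B\mathrm{Diff}(M) \to BG(F)$ is an obstructed problem, and the failure is precisely what $A$-theory, $\Wh(X)$, and the stable parametrized $h$-cobordism space are designed to measure. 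Indeed the Dwyer--Weiss--Williams smooth index theorem that this very paper invokes in item (B) of the introduction treats ``$p$ is fiberwise equivalent to a fiber bundle of compact smooth manifolds'' as a special and nongeneric hypothesis with real consequences. The Becker--Gottlieb transfer for a general fibration with finitely dominated fibers is defined via parametrized Spanier--Whitehead duality (dualizability of $\Sigma^\infty_{+B} E$ over $B$), not via fiberwise Pontryagin--Thom collapses, so there is no smooth model to collapse.

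Even granting the smooth case, your argument as written is a heuristic: tubular neighborhoods do not compose strictly, only up to (parametrized) isotopy, and the historical difficulty of this theorem is precisely making the resulting identification coherent and then extending it from the smooth/ENR situation to the generality of finitely dominated fibers. You flag this as ``the main obstacle,'' which is the right instinct, but it is not a technical nuisance to be deferred; it is where the content of the theorem lives. Your second suggestion --- identifying $\tr_p$ with a categorical trace in the bicategory of parametrized spectra and invoking functoriality of trace --- is much closer to a viable strategy and closer in spirit to how Klein--Malkiewich, Ponto--Shulman, and the Dwyer--Weiss--Williams school actually treat the transfer. But as you note, it shifts the burden to the nontrivial comparison between the categorical trace and the classical Becker--Gottlieb transfer, and that comparison, together with its coherence, is essentially what Klein--Malkiewich supply. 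In short: the first route fails at step one; the second route is the right one but is not sketched in enough detail to constitute a proof.
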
  
\begin{proof}
See \cite[Theorem A]{KM}. We note that this is shown more generally for fibrations for which each fiber is a retract up to homotopy of a finite CW complex. 
\end{proof}

\begin{theorem} \label{main-thm-1}
Let $p \colon E \to B$ be a fibration with homotopy finite fibers where $B$ is homotopy finite. Let $(V, B) \in \Rfd(B)$. Then the following diagram commutes up to homotopy: 
\begin{equation} \label{pre-group-completion}
\xymatrix{
\M_B(V) \ar[r]^{j_B^V} \ar[rd]_{p^*j_B^V} & A(B) \ar[r]^{\Tr_B} & QB_+ \ar[d]^{\tr_p} \\
& A(E) \ar[r]_{\Tr_E} & QE_+.
}
\end{equation}
\end{theorem}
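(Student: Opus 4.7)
The plan is to apply Corollary~\ref{identify-trace-map} to rewrite the top composite in \eqref{pre-group-completion} as a Becker--Gottlieb transfer composite minus a constant, to produce an analogous identification for the bottom composite, and then to match the two expressions using Theorem~\ref{BG-functoriality} (Klein--Malkiewich) together with the pullback naturality of the Becker--Gottlieb transfer.

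For the top composite, Corollary~\ref{identify-trace-map} directly gives
$$\Tr_B \circ j_B^V \simeq \bigl(Q(\pi r)_+ \circ \tr_{p_B^V}\bigr) - \mathrm{const}_{\chi(B)}$$
as maps $\M_B(V) \to QB_+$. Post-composing with the infinite loop map $\tr_p$, and using Theorem~\ref{BG-functoriality} applied to the composition $E \xrightarrow{p} B \to \ast$ (together with the identification of $\chi(X)$ with the Becker--Gottlieb transfer of $X \to \ast$ indicated in the parenthetical of Corollary~\ref{identify-trace-map}) to conclude that $\tr_p(\chi(B)) \simeq \chi(E)$, I obtain
$$\tr_p \circ \Tr_B \circ j_B^V \simeq \bigl(\tr_p \circ Q(\pi r)_+ \circ \tr_{p_B^V}\bigr) - \mathrm{const}_{\chi(E)}.$$

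For the bottom composite, I form the pullback $V' = \mathscr{E}_B(V) \times_B E$, equipped with its natural retraction $r' \colon V' \to \M_B(V) \times E$ and fibration $q' \colon V' \to \M_B(V)$ obtained by composing $r'$ with the projection. Let $\phi \colon V' \to \mathscr{E}_B(V)$ denote the canonical pullback projection, so that $q' = p_B^V \circ \phi$, and let $\pi' \colon \M_B(V) \times E \to E$ be the projection. The pullback naturality of the $A$-theory transfer, combined with Proposition~\ref{comparison-1}, identifies $p^* \circ j_B^V$ with $\pi'_* \circ \chi(V', \M_B(V) \times E)$. Re-running the argument of Corollary~\ref{identify-trace-map} in this setting---via Proposition~\ref{relative-characteristic}, Proposition~\ref{chi-eta-transfer}, Theorem~\ref{Lind-Malkiewich}, and the naturality of $\Tr$---then yields
$$\Tr_E \circ p^* \circ j_B^V \simeq \bigl(Q(\pi' r')_+ \circ \tr_{q'}\bigr) - \mathrm{const}_{\chi(E)}.$$

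To conclude, the constant parts of the two expressions already agree, so it suffices to show that $Q(\pi' r')_+ \circ \tr_{q'} \simeq \tr_p \circ Q(\pi r)_+ \circ \tr_{p_B^V}$. Theorem~\ref{BG-functoriality} applied to the composition $q' = p_B^V \circ \phi$ gives $\tr_{q'} \simeq \tr_\phi \circ \tr_{p_B^V}$, and the pullback naturality of the Becker--Gottlieb transfer applied to the pullback square with horizontals $\pi r, \pi' r'$ and verticals $\phi, p$ gives $Q(\pi' r')_+ \circ \tr_\phi \simeq \tr_p \circ Q(\pi r)_+$. Combining these identifies the non-constant parts. The main obstacle I anticipate is the identification in the third paragraph: realizing $p^* \circ j_B^V$ as the unparametrized $A$-theory characteristic of the pullback retractive object $V'$ and then re-running the Corollary's proof in this slightly more general setting; once this is in place, the remainder is a formal consequence of the stated naturality properties of the Becker--Gottlieb transfer.
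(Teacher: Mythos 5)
Your proposal is correct and follows essentially the same route as the paper's proof: identify both composites in terms of Becker--Gottlieb transfers via Proposition~\ref{comparison-1}, Proposition~\ref{relative-characteristic}, Proposition~\ref{chi-eta-transfer} and Theorem~\ref{Lind-Malkiewich} (with the same pullback object $V' = \mathscr{E}_B(V)\times_B E$ that the paper calls $\mathscr{P}$), and then conclude by Theorem~\ref{BG-functoriality} together with pullback naturality of $\tr$. The only difference is cosmetic: you apply BG-functoriality to the bottom composite and pullback naturality last, while the paper applies pullback naturality to the top composite first and then BG-functoriality; and the identification $(\mathrm{Id}\times p)^*\chi(\mathscr{E}_B(V),\M_B(V)\times B) \simeq \chi(V',\M_B(V)\times E)$, which you attribute to ``pullback naturality of the $A$-theory transfer,'' in fact also uses the naturality of the coassembly map (the paper signals this via the ``cf.\ Proposition~\ref{chi-eta-transfer}'' remark).
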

\begin{proof}
We denote $\M \colon = \M_B(V)$ and $\mathscr{E} \colon = \mathscr{E}_B(V)$ for notational simplicity. Since both $X \mapsto A(X)$ and $X \mapsto QX_+$ send finite coproducts to finite products 
up to homotopy equivalence, we may assume that $E$ and $B$ are path-connected by restricting to path components if necessary. Note that $E$ is also homotopy finite. 

Firstly, we analyse the lower composition of \eqref{pre-group-completion} using our previous identifications. By Proposition \ref{comparison-1} and the 
naturality properties of the $A$-theory transfer map and of the trace map $\Tr$, the lower composition agrees up to homotopy with the upper composition 
in the following homotopy commutative diagram:
\begin{equation} \label{diagram-1}
\xymatrix{
\M \ar[rr]^(.4){\chi(\mathscr{E}, \M \times B)} \ar[rrd]_{j_B^V} && \ A(\M \times B) \ar[r]^{(\mathrm{Id} \times p)^*} \ar[d]_{\pi_*} & A(\M \times E) \ar[d]_{\pi_*} \ar[r]_{\Tr_{\M \times E}} & Q(\M \times E)_+ \ar[d]^{Q\pi_+} \\
&& A(B) \ar[r]_{p^*} & A(E) \ar[r]_{\Tr_E} & QE_+.
}
\end{equation}
Consider the fibration $p'$ which is the pullback of the fibration $\mathrm{Id} \times p$ along the retraction map $r\colon \mathscr{E} \to \M \times B$,
\[
 \xymatrix{
 \mathscr{P} \ar[r]^(.4){r'} \ar[d]_{p'} & \M \times E \ar[d]^{\mathrm{Id} \times p} \\
 \mathscr{E} \ar[r]^(.4)r \ar[d]_{p_B^V} & \M \times B \ar[dl]^{\pr} \\
\M.
 }
\]
By the definition of the $A$-theory characteristic (cf. Proposition \ref{chi-eta-transfer}), the upper composition in the previous diagram \eqref{diagram-1} agrees up to homotopy with the composite map
$$\M \xrightarrow{\chi(\mathscr{P}, \M \times E)} A(\M \times E) \xrightarrow{\Tr_{\M \times E}} Q(\M \times E)_+ \xrightarrow{Q\pi_+} QE_+.$$
The last composite map can be identified in terms of Becker-Gottlieb transfer maps using our previous identifications. More specifically, using Proposition \ref{relative-characteristic}, Proposition \ref{chi-eta-transfer} and Theorem \ref{Lind-Malkiewich}, the last composite map is homotopic to the difference 
of homotopy classes of maps:
\begin{equation} \label{difference-1} 
[\M\subset Q \M_+ \xrightarrow{\tr_{p_B^V  p'}} Q\mathscr{P}_+ \xrightarrow{Q(\pi r')_+} QE_+] - [\M\subset Q \M_+ \stackrel{\tr_{\pr}}{\to} Q(\M \times E)_+ \stackrel{Q\pi_+}{\to} QE_+]
\end{equation}
where the second class is the class of the constant map $\mathrm{const}_{\chi(E)}$. On the other hand, by Corollary \ref{identify-trace-map} and the naturality properties of the 
Becker-Gottlieb transfer map $\tr$ with respect to pullback squares, the upper composition in \eqref{pre-group-completion} can be identified up to homotopy with the difference of homotopy classes of maps: 
\begin{equation} \label{difference-2} 
\begin{split}
[\M \subset Q\M_+ \xrightarrow{\tr_{p_B^V}} Q\mathscr{E}_+ \xrightarrow{Q(\pi  r)_+} QB_+ \xrightarrow{\tr_p} 
Q E_+] - [\M \xrightarrow{\mathrm{const}_{\chi(B)}} Q B_+ \xrightarrow{\tr_p} QE_+] =\\
[\M \subset Q \M_+ \xrightarrow{\tr_{p_B^V}} Q\mathscr{E}_+ \xrightarrow{\tr_{p'}} Q\mathscr{P}_+ \xrightarrow{Q(\pi r')_+} 
QE_+] - \mathrm{const}_{\chi(F) \chi(B) = \chi(E)}
\end{split}
\end{equation}
where $F$ denotes the fiber of $p$. The last two homotopy classes \eqref{difference-1} and \eqref{difference-2} agree by Theorem \ref{BG-functoriality}.  
\end{proof}

\section{Comparison of transfer maps}

\subsection{Preliminaries on acyclic maps} A space $X$ is called \emph{acyclic} if $\widetilde{H}(X; \ZZ) = 0$. A map $f \colon X \to Y$ is called \emph{acyclic} if each of its homotopy fibers is an acyclic space. Equivalently, a map $f \colon X \to Y$ is acyclic if it induces isomorphisms 
$H_*(X; f^*A) \cong H_*(Y; A)$ for any local coefficient system of abelian groups $A$ on $Y$. It follows that acyclic maps are closed under homotopy pullbacks and homotopy pushouts. Assuming that $X$ is path-connected, the  acyclic maps $f \colon X \to Y$ are exactly the maps which are given by a plus construction $X \to X^+_P$ with respect to a perfect normal subgroup $P$ of $\pi_1(X, x)$. We refer to \cite{HH, Ra} for further characterizations of acyclic maps and more details about their properties. 

We will need the following result about the relationship between acyclic maps and homotopy finiteness. We recall that a group $G$ is \emph{locally perfect} if every finitely 
generated subgroup of $G$ is contained in a finitely generated perfect subgroup of $G$. 

\begin{theorem}[Hausmann--Vogel \cite{HV}] \label{H-V}
Let $X$ be a path-connected space and let $f \colon X \to X^+_P$ be the plus construction with respect to a locally perfect normal subgroup $P$ of $\pi_1(X, x)$. 
Suppose that $X^+_P$ is homotopy finite. Then there is a finite CW complex $L$ and a map $\gamma \colon L \to X$ such that the composition $L \xrightarrow{\gamma} X \xrightarrow{f} X^+_P$ is again acyclic.  
\end{theorem}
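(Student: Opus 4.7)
Proof proposal. Since $X^+_P$ is homotopy finite, I would fix a finite CW complex $Y$ together with a homotopy equivalence $h \colon Y \xrightarrow{\simeq} X^+_P$. The plan is to construct $L$ by starting from the $1$-skeleton of $Y$ and attaching finitely many $2$- and $3$-cells, organised so that (i) the resulting complex admits a lift $\gamma \colon L \to X$ of an extension of $h|_{Y^{(1)}}$, and (ii) the composite $f\gamma \colon L \to X^+_P$ is a plus construction with respect to a perfect normal subgroup of $\pi_1(L)$, hence acyclic by Quillen's theorem.

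Skeletal lifting reveals the obstructions. Since $f$ is acyclic (in particular $\pi_1$-surjective), $h|_{Y^{(1)}}$ lifts to $\gamma_1 \colon Y^{(1)} \to X$. For each $2$-cell $e$ of $Y$ with attaching loop $\phi_e$, the loop $h\phi_e$ bounds in $Y$, so $[\gamma_1 \phi_e] \in \ker(f_*) = P$. This yields finitely many obstruction classes $p_1, \dots, p_n \in P$, and by local perfectness they are contained in a finitely generated perfect subgroup $P_0 \leq P$. Now build $L$ by attaching to $Y^{(1)}$: (a) the $2$-cells of $Y$ together with finitely many \emph{extra} $2$-cells whose attaching loops map under (an extension of) $\gamma_1$ to a chosen generating set of $P_0 \subseteq \pi_1(X)$; and (b) $3$-cells along $2$-sphere classes realising commutator expressions for the generators of $P_0$ (available because $P_0$ is perfect). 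The extra $2$-cells in (a) trivialise the obstructions $p_e$ in $\pi_1(L)$ and thereby extend $\gamma_1$ to $\gamma \colon L \to X$; the $3$-cells in (b) cancel the extraneous $H_2$-classes introduced in (a). Consequently, $f\gamma$ is $\pi_1$-surjective with kernel $\langle\langle P_0\rangle\rangle_{\pi_1(L)}$, which is perfect, so $f\gamma$ is a plus construction and therefore acyclic.

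The main obstacle is executing step (b) compatibly with the lift $\gamma$: the $3$-cells must kill precisely the new classes in $H_2(L;\ZZ[\pi_1(L)])$ produced in (a), so that $f\gamma$ induces an isomorphism on homology with every local coefficient system pulled back from $X^+_P$. This balance is available exactly because $P_0$ is simultaneously finitely generated --- keeping $L$ a finite CW complex --- and perfect --- so its generators can be expressed as products of commutators whose bounding discs in the universal cover of the $2$-complex under construction furnish the attaching maps for the $3$-cells. Harmonising these two roles of $P_0$ is the technical crux of the argument.
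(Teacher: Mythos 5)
The paper does not actually give an independent proof: it simply invokes this as a special case of Hausmann--Vogel's Theorem 3.1. So your proposal is to be judged on its own merits, and as written it has a real gap.

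The problem is in step (a). You set $\gamma_1 \colon Y^{(1)} \to X$ and then propose to attach ``the $2$-cells of $Y$'' together with extra $2$-cells whose attaching loops map under $\gamma_1$ to a generating set of $P_0$. But $\gamma_1$ does not extend over any of these $2$-cells: for an original $2$-cell $e$ of $Y$ you have $[\gamma_1\phi_e] = p_e \in P_0$, generally nontrivial in $\pi_1(X)$, and for an extra $2$-cell you have $[\gamma_1\psi_i] = g_i$, also nontrivial. Attaching $2$-cells along $\psi_i$ kills the classes $\psi_i$ in $\pi_1(L)$, but this is irrelevant to the extension problem, which lives in $\pi_1(X)$, not in $\pi_1(L)$. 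As written, no map $\gamma \colon L \to X$ is produced. The standard fix --- which your construction lacks --- is to enlarge $Y^{(1)}$ by wedging on extra circles $c_1,\dots,c_m$ sent by (an extension of) $\gamma_1$ to generators $g_i$ of $P_0$, and then to \emph{modify} the attaching map of each original $2$-cell to $\phi_e \cdot w_e^{-1}$, where $w_e$ is a word in the $c_i$ with $\gamma_1(w_e) = \gamma_1(\phi_e)$ in $\pi_1(X)$; only then does the composite become nullhomotopic in $X$ and the $2$-cell attach compatibly with $\gamma$. (One also needs that the extra circles become nullhomotopic under $f\gamma$, which uses $g_i \in P = \ker f_*$, so that a reference map $L \to X^+_P$ compatible with $h$ can still be defined.)

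A second, smaller gap: for the $3$-cells you never check that $\gamma$ extends over them, i.e.\ that the images under $\gamma_2$ of the attaching $2$-spheres vanish in $\pi_2(X)$. And the crux you yourself flag --- that the $3$-cells kill precisely the extraneous classes in $H_2$ with $\ZZ[\pi_1]$-coefficients so that $f\gamma$ becomes a homology equivalence with all local coefficients pulled back from $X^+_P$ --- is the actual content of the theorem and is left entirely unresolved. Since these are exactly the points where one must exploit that $P_0$ is simultaneously finitely generated and perfect, the argument does not yet go through; I would recommend either supplying the missing cell-by-cell bookkeeping (with the corrected attaching maps) or, as the paper does, citing Hausmann--Vogel directly.
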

\begin{proof}
This is a special case of \cite[Theorem 3.1]{HV}.  
\end{proof}
 
\subsection{The main result} Our main result below shows that the Waldhausen trace map is natural up to \emph{weak} homotopy with respect to the $A$-theory and 
Becker-Gottlieb transfer maps respectively. We recall that two maps $f, g \colon X \to Y$ are \emph{weakly homotopic} if for any map $h \colon K \to X$
where $K$ is homotopy finite, the maps $fh, gh \colon K \to Y$ are homotopic. In this case, we also say that $f$ and $g$ agree up to weak homotopy. Clearly, it suffices to consider only homotopy finite spaces $K$ which are path-connected. 

Using the definition of $A$-theory in terms of group completion (see Subsections \ref{recollect-1} and \ref{classifying-1}) and the `group completion' theorem \cite{McDS, RW}, there is an acyclic map for any 
based path-connected space $X$:
$$\iota \colon \ZZ \times \M_{X}^{\infty} : = \ZZ \times (\hocolim_{n,k \to \infty} \M_{X}^{n,k}) \longrightarrow \ZZ \times (\M_X^{\infty})^+ \simeq A(X).$$

\begin{theorem} \label{main-thm-2}
Let $p \colon E \to B$ be a fibration with homotopy finite fibers. Then 
\begin{equation} \label{main-square-1}
\xymatrix{
A(B) \ar[d]_{p^*} \ar[r]^{\Tr_B} & QB_+ \ar[d]^{\tr_p} \\
A(E) \ar[r]_{\Tr_E} & QE_+
}
\end{equation}
commutes up to weak homotopy.
\end{theorem}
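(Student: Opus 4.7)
The plan is to test the square against all maps $h : K \to A(B)$ with $K$ homotopy finite and path-connected, as the definition of weak homotopy requires. Given such $h$, I want to show that the two composites $\Tr_E p^* h$ and $\tr_p \Tr_B h$ from $K$ to $QE_+$ are homotopic.

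The first step is to reduce to the case when $B$ is itself homotopy finite. Since $B$ is the filtered colimit of its homotopy finite subspaces and $A$-theory preserves filtered colimits, $h$ factors up to homotopy as $K \xrightarrow{h'} A(B') \to A(B)$ for some homotopy finite $B' \subseteq B$. Writing $E' = p^{-1}(B')$ and using the naturality of $p^*$ and $\tr_p$ with respect to the pullback square induced by $B' \hookrightarrow B$, together with the covariant naturality of the trace maps, the two composites of interest become $Q(i'_+) \Tr_{E'} (p|_{B'})^* h'$ and $Q(i'_+) \tr_{p|_{B'}} \Tr_{B'} h'$, where $i' : E' \hookrightarrow E$. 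Hence it suffices to prove the homotopy for the restricted data $(p|_{B'}, B', E')$, which reduces us to the case where $B$ is homotopy finite.

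With $B$ homotopy finite, I use the group completion description $A(B) \simeq \ZZ \times (\M_B^{\infty})^+$ and form the homotopy pullback of $h$ along the acyclic map $\iota$:
$$
\xymatrix{
P \ar[d]_j \ar[r] & \ZZ \times \M_B^{\infty} \ar[d]^{\iota} \\
K \ar[r]_h & A(B).
}
$$
The map $j : P \to K$ is acyclic, since acyclic maps are closed under homotopy pullback. Applying Theorem \ref{H-V} of Hausmann--Vogel (with $K$ playing the role of the homotopy finite target), I obtain a finite CW complex $L$ and a map $\gamma : L \to P$ such that $j \gamma : L \to K$ is acyclic. Compactness of $L$ together with $\M_B^{\infty} = \hocolim_{n,k} \M_B^{n,k}$ means that the composite $L \to P \to \ZZ \times \M_B^{\infty}$ factors up to homotopy through a finite stage $\widetilde{h} : L \to \M_B^{n,k}$ for some $n, k$, with $j_B^{n,k} \widetilde{h}$ homotopic to $h (j\gamma)$.

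Now Theorem \ref{main-thm-1}, applied to $(V, B) = (B \vee \bigvee_{1}^{k} S^n, B)$ (which is allowed because $B$ is homotopy finite), yields $\Tr_E p^* j_B^{n,k} \simeq \tr_p \Tr_B j_B^{n,k}$ as maps $\M_B^{n,k} \to QE_+$. Precomposing with $\widetilde{h}$ gives $\Tr_E p^* h (j\gamma) \simeq \tr_p \Tr_B h (j\gamma)$. Finally, because $j \gamma$ is acyclic and $QE_+$ is an infinite loop space, hence has abelian fundamental group, the universal property of the plus construction implies that precomposition with $j\gamma$ induces a bijection on free homotopy classes $[K, QE_+] \to [L, QE_+]$. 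This upgrades the homotopy after precomposition to a homotopy $\Tr_E p^* h \simeq \tr_p \Tr_B h$, completing the argument. The main obstacle is verifying that Hausmann--Vogel applies to $j : P \to K$, since that requires $j$ to be a plus construction with respect to a \emph{locally} perfect normal subgroup of $\pi_1(P)$; this should follow from the corresponding property of $\iota$ (the relevant perfect subgroup of $\pi_1(\M_B^{\infty})$ coming from the symmetric-group-like automorphisms of spherical objects is locally perfect) and its stability under homotopy pullback.
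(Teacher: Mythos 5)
Your proposal follows the same route as the paper: reduce to $B$ homotopy finite, use the group-completion description of $A(B)$ and a homotopy pullback along the acyclic map $\iota$, apply Hausmann--Vogel to produce a finite complex $L$ with an acyclic composite $L \to K$, factor through some $\M_B^{n,k}$, invoke Theorem \ref{main-thm-1}, and then descend the homotopy along the acyclic map using that $QE_+$ is a simple (nilpotent) space. Two points, however, are not fully handled.

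First, you assert without justification that $j_B^{n,k}\,\widetilde{h}$ is homotopic to $h\,(j\gamma)$. This is not quite right: the acyclic map $\iota$ restricted to $\{m\}\times\M_B^{n,k}\subset\ZZ\times\M_B^\infty$ is $j_B^{n,k}$ shifted by a constant (the group completion translates the natural map $\M_B^{n,k}\to A(B)$, which lands in the component $(-1)^n k\in\pi_0 A(B)$, back to the $\{0\}$-component). The two composites $\Tr_E\, p^*$ and $\tr_p\, \Tr_B$ must therefore be shown to agree on such constants before the discrepancy can be ignored. The paper handles this by an explicit check that the square commutes on $\pi_0$ (both sides being multiplication by $\chi(F)$), which your argument omits. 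Without this, the upgrade from the equality on $\M_B^{n,k}$ to the equality on $K$ does not literally go through.

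Second, you flag the ``locally perfect'' hypothesis of Hausmann--Vogel as the main obstacle, and your proposed resolution (``its stability under homotopy pullback'') is not a theorem one can just invoke; it is exactly the content that has to be argued. The paper does this concretely: $\iota_0$ is the plus construction with respect to $E(\ZZ[\pi_1 B])$, which is locally perfect (citing the proof of \cite[Cor.\ 4.2]{HV}), and the pullback $f$ is a plus construction with respect to a perfect normal subgroup that is a \emph{central extension} of $E(\ZZ[\pi_1 B])$, hence still locally perfect by a result of Vogel \cite[Prop.\ 5.5]{Vo}. The central-extension step is the substantive input that replaces your ``stability'' claim. So the overall strategy is correct and essentially coincides with the paper's, but your write-up leaves both of these verifications as genuine gaps rather than completing them.
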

\begin{proof} 
We may assume without loss of generality that $B = |B_{\bullet}|$ is the geometric realization of a simplicial set. Let $K$ be 
a connected finite CW complex and let $h \colon K \to A(B)$ be a map. We need to show that the following diagram commutes up to homotopy:
\begin{equation} \label{weak-main-square-1}
\xymatrix{
K \ar[r]^{h} \ar[rd]_{p^*h} & A(B) \ar[r]^{\Tr_{B}} & QB_+ \ar[d]^{\tr_{p}} \\
& A(E) \ar[r]_{\Tr_{E}} & QE_+.
}
\end{equation} 
Since $A$-theory preserves homotopy filtered colimits, the map $h$ factors up to homotopy as $K \xrightarrow{h'} A(B') \to A(B)$ where $B' \subset B$ is a finite subcomplex. 
Therefore it suffices to restrict to the case where $B$ is a finite CW complex (and therefore $E$ is homotopy finite) because of the naturality properties of $p^*$ and $\tr_p$ with respect to pullbacks. Since both 
functors $X \mapsto QX_+$ and $X \mapsto A(X)$ send finite coproducts to finite products up to homotopy equivalence, it also suffices to restrict to the case where $E$ and $B$ are path-connected. 

Note that \eqref{main-square-1} commutes at the level of $\pi_0$ because it can be directly identified with the commutative square
(assuming here that $E$ and $B$ are path-connected):
$$
\xymatrix{
\ZZ \ar[d]_{\chi(F)} \ar[r]^{\cong} &  \ZZ \ar[d]^{\chi(F)} \\
\ZZ \ar[r]_{\cong} &  \ZZ
}
$$
where $F$ denotes the fiber of $p$. The identification of the left vertical homomorphism is 
immediate from the definition of the $A$-theory transfer. The identification of the right vertical homomorphism is well known from the properties of Becker-Gottlieb transfer maps (see \cite{BG}). 

As a consequence of these observations, we see that it suffices to consider the case where $B = |B_{\bullet}|$ is a based connected finite CW complex and the map $h \colon K \to A(B)$ factors through $\{0\} \times \M_{B}^{\infty}{}^+ \rightarrow A(B)$ as this is the inclusion of a component up to homotopy equivalence. Then we consider the homotopy pullback: 
$$
\xymatrix{
W \ar[d]_f \ar[r]^{\overline{h}} & \M_{B}^{\infty} \ar[d]^{\iota_0} \\
K \ar[r]_{h} & \M_{B}^{\infty}{}^+ 
}
$$
where $\iota_0$ and (therefore also) $f$ are acyclic maps. Moreover, $\iota_0$ is the plus construction with respect to the perfect normal subgroup $E(\ZZ[\pi_1 B])$, which is also locally perfect (see the proof of \cite[Corollary 4.2]{HV}). 
The map $f$ is then a plus construction with respect to a perfect normal subgroup which is a central extension of $E(\ZZ[\pi_1 B])$, therefore it is also locally perfect 
(see \cite[Proposition 5.5]{Vo}). 

By Theorem \ref{H-V}, there is a finite CW complex $K'$ and a map $\gamma \colon K' \to W$ such that the composition 
$j \colon K' \xrightarrow{\gamma} W \xrightarrow{f} K$ is again acyclic. Since $QE_+$ is an $H$-space, it suffices to show that 
\eqref{weak-main-square-1} commutes up to homotopy after precomposition with the acyclic map $j \colon K' \to K$. Using that $K'$ is finite, 
the map $K' \xrightarrow{\gamma} W \xrightarrow{\overline{h}} \M_{B}^{\infty}$ factors up to homotopy through $\M_B^{n,k} \subset \M_B^{\infty}$ for some $n, k \geq 0$. 
Then the result follows from Theorem \ref{main-thm-1}.  
\end{proof}

\begin{corollary} 
Let $p \colon E \to B$ be a fibration with homotopy finite fibers. Then the composition of infinite loop space maps
$$\Wh(B) \stackrel{\subset}{\to} A(B) \xrightarrow{p^*} A(E) \xrightarrow{\Tr_E} QE_+$$
is weakly homotopic to the trivial map.
\end{corollary}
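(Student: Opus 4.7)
The plan is to deduce this immediately from Theorem \ref{main-thm-2}, combined with the basic observation that the composite
\[
\Wh(B) \stackrel{\subset}{\to} A(B) \xrightarrow{\Tr_B} QB_+
\]
is already nullhomotopic. This latter observation is essentially tautological: since $\Tr_B$ is by construction a retraction of $\eta_B \colon QB_+ \to A(B)$ and $\Wh(B)$ is the cofiber of $\eta_B$, the splitting realizes the product decomposition \eqref{prod-dec} in such a way that $\Wh(B) \hookrightarrow A(B)$ is the inclusion of the second factor while $\Tr_B$ is the projection onto the first factor; hence their composite is null as a map of infinite loop spaces.

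Given this, the rest is a direct application of the definition of weak homotopy. Let $K$ be a path-connected homotopy finite space and let $h \colon K \to \Wh(B)$ be an arbitrary map; denote by $h' \colon K \to A(B)$ the composite with the inclusion $\Wh(B) \hookrightarrow A(B)$. By Theorem \ref{main-thm-2}, the square \eqref{main-square-1} commutes up to weak homotopy, so the two maps $\Tr_E \circ p^* \circ h'$ and $\tr_p \circ \Tr_B \circ h'$ from $K$ to $QE_+$ are homotopic. However, the second of these maps factors through the nullhomotopic composite $\Tr_B \circ (\Wh(B) \hookrightarrow A(B)) \circ h$ noted above, hence is itself nullhomotopic. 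Therefore $\Tr_E \circ p^* \circ h'$ is nullhomotopic as well, which is exactly the statement that the composition in question is weakly homotopic to the trivial map.

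There is no substantial obstacle here; the corollary is simply a formal consequence of Theorem \ref{main-thm-2} together with the fact that the Waldhausen trace vanishes on the $\Wh$-summand by construction. One should only take care to observe that the vanishing of $\Tr_B$ on $\Wh(B)$ is part of the very data of the splitting \eqref{prod-dec}, so that no additional argument about the nature of the retraction is needed.
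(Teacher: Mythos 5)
Your proof is correct and is exactly the intended argument: the paper states this corollary without proof (having already noted in the introduction that the commutativity of the square implies the vanishing of this composite), and the formal deduction from Theorem~\ref{main-thm-2} plus the tautological vanishing of $\Tr_B$ on the $\Wh(B)$-summand is precisely what is meant.
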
 
 
\section{Final Remarks}

\subsection{Further questions} There are two obvious further questions we can ask about the commutativity of \eqref{main-square-1}: 
\begin{itemize}
\item[(1)] Does \eqref{main-square-1} commute up to homotopy (and not just up to \emph{weak} homotopy)?
\item[(2)] If yes, is there a choice of homotopy for \eqref{main-square-1} which is natural in $p$ (with respect to operations, such as: (i) composition of fibrations, (ii) pullback of fibrations, (iii) homotopy filtered colimits 
of fibrations, etc.)?
\end{itemize}
Even though an answer to Question (1) is not immediate using our approach, we believe that the commutativity of \eqref{main-square-1} holds also up 
to homotopy. For this, it seems sufficient to refine Theorem \ref{main-thm-1} so that the choice of the homotopy in \eqref{pre-group-completion} is natural 
in $(V, B)$ (in a suitable sense). In this case, we could then conclude that 
\eqref{pre-group-completion} commutes up homotopy also in the stabilized case of $\M_B^{\infty}$ in place of $\M_B(V)$. Obtaining such a refinement of Theorem \ref{main-thm-1} 
essentially depends on the naturality properties of the homotopy for the functoriality of the Becker-Gottlieb transfer maps in Theorem \ref{BG-functoriality}. 
We do not know whether the necessary naturality properties hold for the homotopy shown in \cite{KM}. On the other hand,  
we also hope that an altogether different approach to Question (1) might be possible. We point out 
that an affirmative answer to Question (1) implies Theorem \ref{BG-functoriality} -- as a consequence of the naturality of the $A$-theory transfer maps. 

Similarly, assuming that \eqref{main-square-1} commutes up to homotopy, an answer to Question (2) requires further refinements of our results regarding 
the naturality properties of certain identifications. Note that a positive answer to Question (2) also implies Theorem \ref{Lind-Malkiewich}. In addition, 
a positive answer to Question (2) implies analogous naturality properties for the functoriality of the Becker-Gottlieb transfer maps, using again the naturality properties of the $A$-theory transfer maps.  
Thus, an independent approach to Question (2) would be desirable also in order to conclude a refined version of Theorem \ref{BG-functoriality}. 

\subsection{Transfer maps in THH} Given a based path-connected space $X$, we let $\mathrm{THH}(\mathbb{S}[\Omega X])$ denote the topological Hochschild homology of the spherical group ring associated to the space $X$. It is known that there is a 
homotopy equivalence of infinite loop spaces $\mathrm{THH}(\mathbb{S}[\Omega X]) \simeq QLX_+$ where $LX$ denotes the free loop space of $X$. Given a fibration $p \colon E \to B$ with homotopy finite fibers between based 
path-connected spaces $E$ and $B$, there is a transfer map in THH:
$$QLB_+ \simeq \mathrm{THH}(\mathbb{S}[\Omega B]) \xrightarrow{p^*} \mathrm{THH}(\mathbb{S}[\Omega E]) \simeq QLE_+.$$
We refer to \cite{LM} for the definition and properties of the THH (or free loop) transfer map. It is known that the Waldhausen trace map $\Tr$ factors naturally through THH,
$$A(X) \longrightarrow \mathrm{THH}(\mathbb{S}[\Omega X]) \simeq QLX_+ \xrightarrow{\mathrm{ev}} QX_+,$$ 
where the first map is a topological version of the Dennis trace map and the last map is given by evaluation at $1 \in S^1$ (see \cite{LM, Wa3}). Thus, given a fibration $p \colon E \to B$ as above, we may consider the following diagram: 
\begin{equation} \label{THH}
\xymatrix{\ar@{}[dr] | {\checkmark} 
A(B) \ar[d]^{p^*} \ar[r] & \mathrm{THH}(\mathbb{S}[\Omega B]) \ar@{}[dr] | {\text{\sffamily X}}  \ar[d]^{p^*} \ar[r] & QB_+ \ar[d]^{\tr_p} \\
A(E) \ar[r] & \mathrm{THH}(\mathbb{S}[\Omega E]) \ar[r] & QE_+. 
}
\end{equation}
The left--hand square commutes up to canonical homotopy -- because the trace map from algebraic $K$-theory to THH is natural with respect to exact functors. On the other hand, 
the right--hand square does \emph{not} commute by results of \cite{LM, Sc} -- for example, it fails to commute in general for the universal covering of $BG$ 
where $G$ is a finite group. It would be interesting to know if this failure of commutativity for the right--hand square is corrected when one restricts to the homotopy 
$S^1$-fixed points of THH.

\end{document}